\newtheorem{theorem}{Theorem}
\newtheorem{corollary}[theorem]{Corollary}
\newtheorem{lemma}[theorem]{Lemma}
\newtheorem*{questions}{Problems}
\theoremstyle{definition}
\theoremstyle{remark}
\newtheorem*{remark}{Remark}
\newtheorem*{example}{Example}
\newcommand{\J}{\mathcal{J}}
\newcommand{\E}{\mathcal{E}}
\newcommand{\Z}{\mathbb{Z}}
\newcommand{\Q}{\mathbb{Q}}
\newcommand{\ch}{\mathrm{char}}
\newcommand{\F}{\mathbb{F}}
\newcommand{\AGM}{\mathrm{AGM}}
\newcommand{\class}{\mathrm{class}}
\newcommand{\R}{\mathbb{R}}
\newcommand{\SL}{\operatorname{SL}}
\newcommand{\C}{\mathbb{C}}
\newcommand{\leg}[2]{\genfrac{(}{)}{}{}{#1}{#2}}
\numberwithin{equation}{section}
\begin{document}

\title{AGM and Jellyfish Swarms of Elliptic Curves}
\markright{AGM and Jellyfish Swarms of Elliptic Curves}
\author{Michael J. Griffin, Ken Ono,  Neelam Saikia, and Wei-Lun Tsai}

\maketitle

\begin{abstract} The classical $\AGM$ produces wonderful infinite sequences of arithmetic and geometric means with common limit. For finite fields $\F_q,$ with $q\equiv 3\pmod 4,$ we introduce a finite field analogue $\AGM_{\F_q}$ that
spawns directed finite graphs instead of infinite sequences. The compilation of these graphs reminds one of a {\it jellyfish swarm}, as the
3D renderings of the connected components resemble {\it jellyfish} (i.e., tentacles connected to a bell head).
These swarms turn out to be more than the stuff of child's play; they are taxonomical devices in number theory. Each  jellyfish is an isogeny graph of elliptic curves with isomorphic 
 groups of $\F_q$-points, which can be used to prove that each swarm has at least
 $(1/2-\varepsilon)\sqrt{q}$ jellyfish.
 This interpretation also gives a description of the  {\it class numbers} of Gauss, Hurwitz, and Kronecker which is akin to counting types of spots on jellyfish.
\end{abstract}

\section{Arithmetic and geometric means.}

Beginning with positive real numbers $a_1:=a$ and $b_1:=b,$ the $\AGM_{\R}$ inductively produces a sequence of pairs
$\AGM_{\R}(a,b):=\{(a_1, b_1), (a_2, b_2),\dots\},$ consisting of 
arithmetic and geometric means. Namely,  for $n\geq 2,$ we let
$$
a_n:=\frac{a_{n-1}+b_{n-1}}{2} \ \ \ \ \ {\text {\rm and}}\ \ \ \ \ b_n:=\sqrt{a_{n-1}b_{n-1}}.
$$
For $n \geq 2,$ we have the elementary inequality $a_n\geq b_n.$ At a deeper level, the classical theory of the $\AGM_\R$ (for example, see Chapter 1 of \cite{Borwein}) establishes that these rapidly converging sequences have a common limit 
$ \lim_{n\rightarrow +\infty} a_n=\lim_{n\rightarrow +\infty}b_n.$

In 1748, Euler \cite{Borwein} employed 
$\AGM_{\R}(\sqrt{2},1)$
as a remarkable device for rapidly computing digits of $\pi.$
Namely, he showed that  $\pi = \displaystyle{\lim_{n\rightarrow +\infty} p_n},$ where
$$p_n:= \frac{a_n^2}{1-\sum_{i=1}^n 2^{i-2}(a_i^2-b_i^2)}.$$
Although the first three terms
$p_1=4, \ p_2=3.18767\dots,\ \ {\text {\rm and}} \ \ \ p_3=3.14168\dots$
are quite satisfying, the next two terms
$$
 p_4=3.14159265389\dots \ \ \ {\text {\rm and}}\ \ \
 p_5=3.14159265358979323846\dots
 $$
 are even more astounding as they give 11 and 20 decimal places of $\pi$ respectively.

Is there a finite field analogue of $\AGM_{\R}?$ If so, what number theoretic secrets does it reveal?
 The non-existence of many square-roots in $\F_q$ poses an obvious obstacle.  One solution would be to consider a process where the finite fields grow in size, 
allowing for the existence of square-roots. However, a second issue arises. Namely, what defines the correct choice of square-root? 
Over $\R,$ the convention of taking positive square-roots guarantees that $\AGM_{\R}$ never ventures beyond $\R.$ Consequently,
over finite fields we seek situations where there are unique choices of square-root that similarly avoid the need for field extensions. 

These requirements hold for
finite fields $\F_q$, where
$q=p^m\equiv 3\pmod 4$ with $p$ prime. These fields enjoy the property that $-1$ is not a square, which corresponds to the fact that $i=\sqrt{-1}$ is not a real number.
For such a field $\F_q,$ we let $\phi_q(\cdot)$ be its quadratic residue symbol (the usual Legendre symbol $\leg{\cdot}{p}$ when $q=p$ is prime).
We then define $\AGM_{\F_q}(a,b)$ for pairs
$a, b\in \F_q^{\times}:=\F_q\setminus \{0\},$ with $a\neq \pm b$ and  $\phi_q(ab)=1.$  This input data gives $a_1:=a$ and $b_1:=b,$ and for $n\geq 2$ we let
\begin{equation}
a_n:=\frac{a_{n-1}+b_{n-1}}{2} \ \ \ \ \
{\text {\rm and}}\ \ \ \ \
b_n:=\sqrt{a_{n-1}\cdot b_{n-1}},
\end{equation}
where $b_n$ is the unique square-root with $\phi_q(a_nb_n)=1.$ 
Although $a_{n-1}b_{n-1}$  has two square-roots,  only one  choice satisfies  $\phi_q(a_n b_n)=1$ as $\phi_q(-1)=-1$.  Therefore, we obtain a sequence of  pairs
$$
\AGM_{\F_q}(a,b):=\{(a_1,b_1), (a_2, b_2),\dots \}.
$$

Let's consider the case of $\F_7.$  Half of the 12 pairs that appear in some $\AGM_{\F_7}(a,b)$ form a single
$\AGM$-orbit
$$
\AGM_{\F_7}(1,2)=\{\overline{(1, 2), (5, 3), (4, 1), (6, 5), (2, 4), (3, 6)},\dots\}
$$
(Note. The overlined pairs form a repeating orbit.).
The other 6 pairs  lead to this orbit after a single step. For example, we have
\begin{equation}\label{exampleF7}
\AGM_{\F_7}(6,3 )=\{(6, 3), \overline{(1, 2), (5, 3), (4, 1), (6, 5), (2, 4), (3, 6)},\dots\}.
\end{equation}
The compilation $\J_{\F_7}$ of all such sequences forms a connected directed graph.

 \begin{center}
\includegraphics[height=35mm]{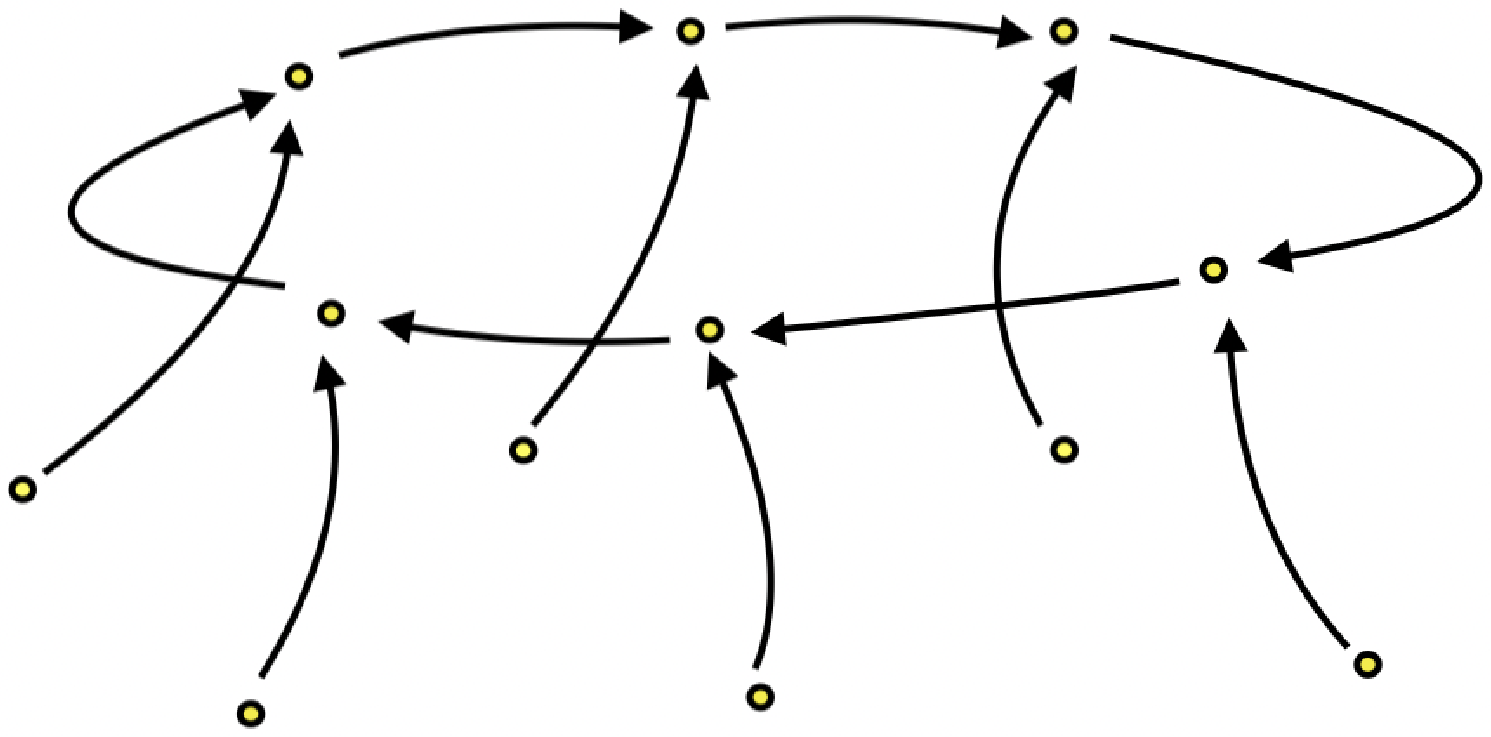}
\captionof{figure}{3D rendering of $\J_{\F_7}$}\label{figure1}
\end{center}

This example is typical.
The compilation of the $\AGM_{\F_q}$ sequences is always a disjoint union of connected directed graphs. The nodes
are {\it admissible ordered pairs}\footnote{There are no loops (i.e., $(a,b)\mapsto (a,b)$) as nodes of the form $(a,a)$ are not allowed.} $(a,b),$ where $a, b \in \F_q^{\times},$
with $a\neq \pm b,$ and $\phi_q(ab)=1.$  Moreover, $(a,b)\mapsto (c,d)$ is an edge if and only if
$c=(a+b)/2$ and
$d=\sqrt{ab},$
with $\phi_q(cd)=1.$  These connected components are like {\it jellyfish}, as their 3D renderings turn out to be unit length tentacles leading to a {\it bell head} cycle.
Hence, we playfully refer to the compilation of the $\AGM_{\F_q}$ sequences
\begin{equation}
\J_{\F_q}:= J_{1}\sqcup J_2 \sqcup \dots \sqcup J_{d(\F_q)}
\end{equation}
as the {\it jellyfish swarm} for $\F_q,$ where the $J_1, J_2,\dots, J_{d(\F_q)}$ are the individual jellyfish which make up the swarm.

Let's summarize some basic facts about $\AGM_{\F_q}$ and the jellyfish swarms $\J_{\F_q}.$ 

\begin{theorem}\label{EZ}
If $\F_q$ is a finite field with $q\equiv 3\pmod 4,$ then the following are true.

\smallskip
\noindent
(1) The $\AGM_{\F_q}$ algorithm is well-defined.

\smallskip
\noindent
(2) The jellyfish swarm $\J_{\F_q}$ has $(q-1)(q-3)/2$ nodes.

\smallskip
\noindent
(3) Every jellyfish has a bell head with length one tentacles pointing to 
each node.

\smallskip
\noindent
(4) If $N_n(\F_q)$ denotes the number of jellyfish with $n$ nodes, then $(q-1) \mid n N_n(\F_q).$
\end{theorem}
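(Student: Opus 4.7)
My plan is to dispatch the four assertions in order. For \textbf{(1)}, I would verify that admissibility is preserved by the map $f(a,b):=(a',b')$, where $a':=(a+b)/2$ and $b'$ is the unique square root of $ab$ with $\phi_q(a'b')=1$: nonvanishing of $a'$ follows from $a\neq-b$; nonvanishing of $b'$ from $ab\neq 0$; the $\phi_q$-condition holds by construction; $a'=b'$ would force $(a-b)^2=0$, contradicting $a\neq b$; and $a'=-b'$ would force $\phi_q(a'b')=\phi_q(-(a')^2)=\phi_q(-1)=-1$, a contradiction. For \textbf{(2)}, I would count directly: of the $(q-1)^2$ ordered pairs in $(\F_q^\times)^2$, exactly $(q-1)^2/2$ satisfy $\phi_q(ab)=1$ (group the coordinates by quadratic-residue type); removing the $q-1$ pairs with $a=b$ and noting that no pair with $a=-b$ survives (since $\phi_q(-1)=-1$) yields $(q-1)^2/2-(q-1)=(q-1)(q-3)/2$.

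The heart of the argument is \textbf{(3)}. Let $P$ denote the set of admissible pairs. Given $(c,d)\in P$, its preimages under $f$ are the ordered pairs $(a,b)$ with $a+b=2c$ and $ab=d^2$, equivalently $\{a,b\}=\{c+e,c-e\}$ with $e^2=c^2-d^2$. Since $c\neq\pm d$ by admissibility, $(c,d)$ has exactly $2$ preimages when $\phi_q(c^2-d^2)=1$ and $0$ otherwise. The crucial identity is that for every admissible $(a,b)$ with $(c,d)=f(a,b)$,
\[
c^2-d^2=\left(\frac{a+b}{2}\right)^2-ab=\left(\frac{a-b}{2}\right)^2,
\]
a nonzero square. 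Hence every node in $f(P)$ has indegree $2$ while every node outside $f(P)$ has indegree $0$; equating total indegree with the number $|P|$ of edges yields $|f(P)|=|P|/2$. Moreover, the two preimages $(c+e,c-e)$ and $(c-e,c+e)$ of $(c,d)$ satisfy $\phi_q((a-b)(a+b))=\phi_q(\pm 4ce)$, and these values differ by the factor $\phi_q(-1)=-1$. Thus exactly one preimage of $(c,d)$ lies in $f(P)$, so the restriction $f\colon f(P)\to f(P)$ is an injective self-map of a finite set, hence a bijection. Therefore $f(P)$ is a disjoint union of cycles (the bell heads), while each of the $|P|/2$ leaves in $P\setminus f(P)$ is a length-one tentacle attached to a unique cycle node.

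For \textbf{(4)}, I would exploit the scaling action of $\F_q^\times$ on $P$ given by $\lambda\cdot(a,b):=(\lambda a,\lambda b)$. This preserves admissibility since $\phi_q(\lambda^2 ab)=\phi_q(ab)$, and it commutes with $f$: the requirement $\phi_q(\lambda c\cdot\lambda d)=\phi_q(cd)=1$ forces the correct square root of $\lambda^2 ab$ to be $\lambda d$, so $f(\lambda a,\lambda b)=\lambda\cdot f(a,b)$. The action is free (the only scalar fixing a nonzero coordinate is $\lambda=1$), so every orbit has size $q-1$. Since $\F_q^\times$ permutes jellyfish of equal size, the $nN_n(\F_q)$ nodes lying in jellyfish with $n$ nodes form a union of $\F_q^\times$-orbits, giving $(q-1)\mid nN_n(\F_q)$. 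The main obstacle is the algebraic identity in (3): once the squareness of $c^2-d^2$ on the image of $f$ is spotted, the entire bell-head-plus-length-one-tentacles picture follows from elementary indegree counting and a sign-tracking argument, and the divisibility (4) is then immediate from the most natural symmetry.
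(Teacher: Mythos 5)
Your proposal is correct and follows essentially the same route as the paper: the same admissibility checks for (1), the same count for (2), the same quadratic-formula computation of preimages together with the sign flip $\phi_q(-1)=-1$ for (3), and the same free $\F_q^{\times}$-scaling action for (4). The only difference is cosmetic: in (3) you package the argument globally (indegree $0$ or $2$, exactly one preimage of an image node is itself an image node, hence $f$ restricted to its image is a bijection and the image is a union of cycles), whereas the paper argues locally at a node already known to lie on a cycle; the key identities are identical.
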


\begin{proof}[Proof of Theorem~\ref{EZ}] \ \ 
\smallskip

\noindent 
(1) If $(a,b)$ is admissible, then we must show that the next pair $(c,d)$ generated by $\AGM_{\F_q}$ is  also admissible.  It is clear that $cd\neq 0$ since $ab\neq 0$ and $a\neq \pm b.$ If $c=\pm d,$ then 
$(a-b)^2=a^2-2ab+b^2=4c^2-4d^2=0,$
 which in turn implies the contradiction that $a=b.$  Therefore, $\AGM_{\F_q}$ is well-defined. 

\smallskip
\noindent
(2) We compute the number of admissible pairs $(a, b)$. There are $q-1$ choices for $a$,
and $(q-3)/2$ choices of $b$ with $a\neq \pm b$ which additionally satisfy the quadratic residue condition $\phi_q(ab)=1.$

\smallskip
\noindent
(3)  Each $\AGM_{\F_q}$ sequence eventually enters a repeating cycle (i.e., the bell head). Suppose that  $(a,b)$ is in this orbit. Reversing $\AGM_{\F_q}$ to find its parents, say $(A,B),$
we have $A+B=2a$ and $AB=b^2.$ Therefore, $(A-B)^2=4a^2-4b^2, $ and so  $\phi_q(a^2-b^2)=1$ with two square-roots.  If $(A,B)$ is the parent in the cycle, then the other solution is $(B,A),$ and is not in the cycle. Therefore, $(a, b)$ has exactly one attached tentacle. To see that this tentacle has length $1$, we use the assumption that $(A,B)$ is in the cycle, and so has a parent of its own. Repeating the argument above, we have $\phi_q(A^2-B^2)=1,$ which in turn gives $\phi_q(B^2-A^2)=-1.$ This means that $(B,A)$ does not have a parent. 

\smallskip
\noindent
(4)  Each $\alpha\in \F_q^{\times}$ induces an automorphism on $\J_{\F_q}$ defined by
$(a,b)\mapsto (\alpha a, \alpha b),$ as $\alpha\cdot \AGM_{\F_q}(a,b)=\AGM_{\F_q}(\alpha a, \alpha b).$ As there are no fixed admissible pairs provided that $\alpha\neq 1,$
we find that the orbit of a node under these automorphisms has size $q-1$.
As these automorphisms permute the jellyfish with fixed size, the claim follows. 
\end{proof}

Theorem~\ref{EZ} inspires many natural questions.  
For example, how small (resp. large) are the jellyfish in a general swarm?
This appears to be a very difficult question.
For instance, there are $\AGM_{\F_q}$-orbits that are much shorter than $q,$ such as the length 9 
$$\AGM_{\F_{67}}(1,17)=\{\overline{(1,17), (9, 33)\dots, (65, 15), (40,29)},\dots\},
$$
as well as those that are much longer than $q,$ such as the length 410
$$\AGM_{\F_{83}}(1,3)=
\{\overline{(1,3), (2, 13),\dots, (37,12), (66,19)},\dots\}.
$$
 These examples correspond to a tiny 18 node jellyfish in $\J_{\F_{67}}$, and
 a gigantic 820 node jellyfish in $\J_{\F_{83}}.$
As another question, what can be said about
$d(\F_q),$ the number of jellyfish in $\J_{\F_q}$? 
Table~\ref{table:1} illustrates the oscillatory behavior of $d(\F_q).$  This erratic sequence
does not appear to settle into a predictable pattern as $q\rightarrow +\infty.$
Indeed, there are many astonishing examples of disproportionate consecutive values, such as $d(\F_{479})= 18$ and
$d(\F_{487})=359.$
 The only clear observation is that the $d(\F_{p^m})$ grow rapidly with $m$ when $p$ is fixed. For instance, we have
\begin{displaymath}
\begin{split}
d(\F_3)=0 \ \ &\rightarrow\ \ d(\F_{3^3})=39 \ \  \rightarrow \ \ d(\F_{3^5})=1210,\\
d(\F_7)=1 \ \ &\rightarrow \ \ d(\F_{7^3})=1539\ \ \rightarrow \ \ d(\F_{7^5})=876713,\\
d(\F_{11})=3 \ \ &\rightarrow \ \ d(\F_{{11}^3})=8778\ \ \rightarrow d(\F_{11^5})=25558635.\\
\end{split}
\end{displaymath}
We shall see that the theory of elliptic curves offers deep insight into these questions.

\begin{table}[h]
\centering
\begin{tabular}{|r|cc|cc|cc|cc|cc|cc|cc|cc|}
\hline \rule[-3mm]{0mm}{6mm}
$q$    && 3 && 7 && 11  && 19 && 23 && 31 && 43 && 47  \\   \hline
$d(\F_q)$ && 0 && 1 && 3 && 8 && 5 && 10 && 7 && 4 \\
\hline  
\hline \rule[-3mm]{0mm}{6mm}
$q$    && 59 && 67 && 71 && 79 && 83 && 103 && 107 && 127 \\   \hline
$d(\F_q)$ && 7 && 30 && 25 && 18 && 6 && 41 && 9 && 54  \\
\hline
\hline \rule[-3mm]{0mm}{6mm}
$q$     && 131 &&  139 && 151 && 163 && 167 && 179 && 191 && 199 \\   \hline
$d(\F_q)$ && 46 && 33 && 45 && 38 && 11 && 14 && 14 && 101\\
\hline
\hline \rule[-3mm]{0mm}{6mm}
$q$     && 211 && 223  && 227 && 239 && 251 && 263 && 271 && 283  \\   \hline
$d(\F_q)$  && 120 && 18 && 12 && 40 && 31&& 17 && 34 && 35  \\
\hline
\end{tabular} 
\begin{center}\caption{\label{table:1} $d(\F_q)$ for pimes $q$}\end{center}
\end{table}

\vspace{-0.35in}

\section{Jellyfish swarms organize elliptic curves.}

Computing arithmetic and geometric means over $\F_q$ might seem like mere child's play.
However, it turns out that this arithmetic process is a taxonomical device in number theory which
organizes elliptic curves.

An {\it elliptic curve} $E$ over a field $\F$ can be thought of as a cubic equation of the form 
$$y^2=f(x)=x^3+ax^2+bx+c,$$ where $a, b, c\in \F,$ and $f$ has non-zero discriminant.
If $E(\F)$ denotes the $\F$-rational points of $E$, including the identity ``point at infinity'' $O,$ then
 $E(\F)$ naturally forms an abelian group via the well-known ``chord-tangent law.'' 
The group law can be described by asserting that three colinear points on an elliptic curve sum to the identity $O$. Number theorists are deeply interested in these groups of rational points.

If $F$ is a number field (i.e., a field which has finite degree over $\Q$), then
a classical theorem by Mordell and Weil asserts that $E(\F),$ the Mordell-Weil group of $E/\F$, is finitely generated.
The special case where $\F=\Q$ is the subject of two frequently cited {\it Monthly} articles.
The beautiful 1993 article by
Silverman \cite{silverman2} on the representation of positive integers as sums of two rational cubes 
describes the intimate relationship between Ramanujan's taxi-cab numbers and positive rank elliptic curves (i.e., curves with infinitely many $\Q$-rational points).
The famous 1991 article by Mazur \cite{mazur} promotes the conjectured ``Modularity'' of elliptic curves over $\Q$ (formerly known as the Taniyama-Weil Conjecture).  This conjecture is now known to be true, largely thanks to
 the work of Wiles and Taylor \cite{taylorwiles, wiles}, which was a celebrated ingredient in the proof of Fermat's Last Theorem.

For the case of finite fields, it turns out
that the jellyfish swarms $\J_{\F_{q}}$ organize elliptic curves. One can think of the nodes as spots on the jellyfish, and these spots will be mapped to curves.
These swarms are coverings of networks of special Legendre elliptic curves
 when $q\equiv 3\pmod 4$ and $p\geq 7.$
 To be precise, for $\lambda \in \F_q\setminus \{0, 1\},$ we recall the {\it Legendre normal form} elliptic curve
 \begin{equation}
 E_{\lambda}: \ \ y^2 = x(x-1)(x-\lambda).
 \end{equation}
 Isomorphism classes of elliptic curves are distinguished by their {\it $j$-invariants}, and for $E_\lambda$ we have
  \begin{equation}\label{jinvariant}
  j(E_\lambda)=2^8\cdot\frac{(\lambda^2-\lambda+1)^3}{\lambda^2(\lambda-1)^2}.
  \end{equation}
 For an introduction to elliptic curves over finite fields, the reader may consult Chapter 4 of
 \cite{Washington}.
 
The jellyfish swarm $\J_{\F_q}$ organizes elliptic curves via the map $\Psi_{\F_q}: \J_{\F_q} \mapsto \E_{\F_q},$ where $\E_{\F_q}$ is the set of Legendre curves over $\F_q,$ and
 \begin{equation}
\Psi_{\F_q}(a,b):=E_{\lambda(a,b)},
\end{equation}
where $\lambda(a,b):=b^2/a^2.$ 
For instance,  (\ref{exampleF7}) gives
\begin{displaymath}
\begin{split}
\Psi_{\F_7}(\AGM_{\F_7}(6, 3))&=\Psi_{\F_7}\left(\{(6, 3), \overline{(1, 2), (5, 3), (4, 1), (6, 5), (2, 4), (3, 6)},\dots\}\right)\\
&=\left\{E_2, \overline{E_4, E_4, E_4, E_4, E_4, E_4},\dots\right\}=\left \{E_2, \overline{E_4},\dots \right\}.
\end{split}
\end{displaymath}

As the values $\lambda(a,b)=b^2/a^2$ cover the squares in $\F_q \setminus \{0,1\},$ it is natural ask what special features are shared by curves of the form $E_{\lambda^2}.$ It turns out that these curves are distinguished by the 2-Sylow subgroups
of their $\F_q$-rational points. 

\begin{lemma}\label{Sylow2}  Suppose that $\F_q$ is a finite field with $q\equiv 3\pmod 4.$ If $\lambda\in \F_q\setminus\{0,1\},$ then the 2-Sylow subgroup of
$E_{\lambda^2}(\F_q)$  is of the form
 $\Z/2\Z \times \Z/2^{2+b}\Z,$ where $b\geq 0.$
 \end{lemma}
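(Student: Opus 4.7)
The plan is to analyze the 2-Sylow subgroup of $E_{\lambda^2}(\F_q)$ directly by combining a count of rational 2-torsion with an explicit search for rational 4-torsion. Since $(0,0)$, $(1,0)$, and $(\lambda^2,0)$ are all $\F_q$-rational and $E_{\lambda^2}(\F_q)$ embeds in $E_{\lambda^2}[N]\cong (\Z/N\Z)^2$ for any $N$ annihilating it, the 2-Sylow subgroup has the form $\Z/2^a\Z \times \Z/2^c\Z$ with $1 \le a \le c$. The lemma is then equivalent to the conjunction of $a=1$ and $c\ge 2$, which I would handle separately.

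For $a = 1$, I would show that $(0,0) \notin 2E_{\lambda^2}(\F_q)$; this forces $a = 1$, since if $a \ge 2$ then $2E_{\lambda^2}(\F_q)$ would necessarily contain all of $E_{\lambda^2}[2]$. The chord-tangent law yields the identity $x_0^2 = (0-1)(0-\lambda^2) = \lambda^2$ for any $P = (x_0, y_0)$ satisfying $2P = (0,0)$, so $x_0 = \pm \lambda$. Substituting into the defining equation gives $y_0^2 = -\lambda^2(\lambda \mp 1)^2$, namely $-1$ times a nonzero square. Since $q \equiv 3 \pmod 4$, $-1$ is a non-square in $\F_q$, so no such $P$ exists.

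For $c \ge 2$, I would produce an explicit rational point of order $4$. Because $\lambda \neq \pm 1$ and $-1$ is a non-square, exactly one of $1 - \lambda^2$ and $\lambda^2 - 1$ is a nonzero square in $\F_q$. In the first case, set $s := \sqrt{1 - \lambda^2}$; a direct substitution shows that $(1+s,\ s(1+s))$ is an $\F_q$-rational point of $E_{\lambda^2}$ which doubles to $(1, 0)$. In the second case, set $t := \sqrt{\lambda^2 - 1}$; the point $(\lambda^2 + \lambda t,\ \lambda t(\lambda + t))$ is rational and doubles to $(\lambda^2, 0)$. Either way we obtain a rational point of exact order $4$, so $c \ge 2$, i.e., $c = 2 + b$ for some $b \ge 0$.

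No deep obstacle arises. The only real work is discovering the correct $x$-coordinates of half-periods, which one finds uniformly via the identity $(x_0 - e_i)^2 = (e_i - e_j)(e_i - e_k)$ for $P$ with $2P = (e_i, 0)$ on $y^2 = \prod_\ell (x - e_\ell)$. The essential number-theoretic input is deployed twice: the non-squareness of $-1$ in $\F_q$ both eliminates rational preimages of $(0,0)$ under doubling and guarantees that precisely one of $\pm(1-\lambda^2)$ is a square, so that one of the two constructions of a 4-torsion point always succeeds.
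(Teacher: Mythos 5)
Your proof is correct and follows the same strategy as the paper's: both arguments come down to showing that $(0,0)\notin 2E_{\lambda^2}(\F_q)$ (so the $2$-Sylow subgroup is not of the form $\Z/4\Z\times\Z/4\Z$ at the bottom) while exactly one of $(1,0)$ and $(\lambda^2,0)$ does lie in $2E_{\lambda^2}(\F_q)$, with the non-squareness of $-1$ for $q\equiv 3\pmod 4$ doing all the work in both steps. The only difference is one of execution: the paper invokes the classical 2-descent lemma as a black box, whereas you verify the two divisibility statements by hand via the duplication formula and explicit half-points such as $(1+s,\,s(1+s))$, which makes the argument self-contained but is not a genuinely different route.
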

 \begin{proof}[Proof of Lemma~\ref{Sylow2}] Elliptic curves with four 2-torsion points (i.e., including the identity) can be written in the form
 $$
 E: \ \ y^2=(x-\alpha)(x-\beta)(x-\gamma).
 $$
 The non-trivial 2-torsion points  correspond to the roots of the cubic. They are the points $(\alpha, 0), (\beta, 0)$ and $(\gamma, 0).$
 For such curves, the classical {\it 2-descent lemma} (see p. 47-49 of \cite{Koblitz} or  p. 315 of \cite{silverman})
 says that a non-zero point $P=(x_0,y_0)\in E(\F_q)$ satisfies $P=2Q,$ where
 $Q\in E(\F_q),$ if and only if 
 $$x_0-\alpha,\ \ x_0-\beta, \ \ x_0-\gamma \in \F_q^2.
 $$
 Here we have $\alpha=0, \beta=1,$ and $\gamma=\lambda^2$. 
Since $q\equiv 3\pmod 4,$ then exactly one of $(1,0)$ and $(\lambda^2,0)$ is in
$2E_{\lambda^2}(\F_q)$, as exactly one of $\pm(1-\lambda^2)$ is a square. On the other hand, $(0,0)\not \in 2E_{\lambda^2}(\F_q)$ by the 2-descent lemma because $-1$ is not a square.
Therefore, the $\Z/4\Z$ rank of $E(\F_q)$ is 1, which means that $E(\F_q)$ contains $\Z/4\Z$ but not $\Z/4\Z \times \Z/4\Z.$
\end{proof}

The $\J_{\F_q}$ organize the curves in $\Psi_{\F_q}(\J_{\F_q})$ as unions of explicit {\it isogeny graphs}.
An \textit{isogeny} between two elliptic curves is a special map $\Phi$ (called a \textit{morphism}) that preserves the identity element, is given  by rational functions $\Phi=(u(x,y),v(x,y)),$ and is a homomorphism on $\F_q$-points with finite kernel. 
The isogeny graph structure of $\Psi_{\F_{q}}(\J_{\F_q})$ is provided by the following theorem.

\begin{theorem}\label{main}
If $\F_q$ is a finite field with $q\equiv 3\pmod 4$ and $\ch(\F_q)=p\geq 7,$
then the following are true.

\smallskip
\noindent
(1) We have that
$$
\Psi_{\F_q}(\J_{\F_q}) = \{ E_{\alpha^2}/\F_q \ : \ \alpha \in \F_q\setminus \{0, \pm 1\}\}.
$$
Moreover, each $E_{\alpha^2}\in \Psi_{\F_q}(\J_{\F_q})$ has $q-1$ preimages.

\smallskip
\noindent
(2) For each $1\leq i\leq d(\F_q),$ we have that $\Psi_{\F_q}(J_i)$ is a connected graph\footnote{The proof of Theorem~\ref{main} shows that the swarms are graphs of 2-isogenies. We point interested readers to Sutherland's expository article
\cite{sutherland} for more on the theory of isogeny graphs.}, where an edge  $(a_n,b_n)\rightarrow (a_{n+1},b_{n+1})\in J_i$ is the isogeny
$\Phi_n: E_{\lambda(a_n,b_n)}\rightarrow E_{\lambda(a_{n+1},b_{n+1})}$ defined by
$$
\Phi_n(x,y):=\left(\frac{(a_n x+b_n)^2}{x(a_n+b_n)^2}, -\frac{a_ny(a_nx-b_n)(a_nx+b_n)}{x^2
(a_n+b_n)^3}\right).
$$
Moreover, we have that $\ker(\Phi_n)=\langle(0,0)\rangle.$
\end{theorem}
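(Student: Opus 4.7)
The plan is to attack parts (1) and (2) separately, since they really address different questions: (1) is a counting problem for admissible pairs with a prescribed Legendre $\lambda$, while (2) is the geometric statement that the $\AGM$ step realizes the classical Landen $2$-isogeny.

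For (1), I would begin with the observation that $\lambda(a,b)=b^2/a^2$ is by construction a nonzero square that differs from $1$ (because $a\neq \pm b$), so the image of $\Psi_{\F_q}$ is contained in $\{E_{\alpha^2}:\alpha\in\F_q\setminus\{0,\pm 1\}\}$. For the reverse inclusion and the preimage count, fix such an $\alpha$. An admissible pair $(a,b)$ maps to $E_{\alpha^2}$ precisely when $b=\pm\alpha a$, and since $\phi_q(-1)=-1$, exactly one sign choice satisfies $\phi_q(ab)=\phi_q(\pm\alpha)=1$. Thus each $a\in\F_q^{\times}$ determines exactly one admissible $b$, yielding the $q-1$ preimages per curve.

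For (2), the crux is the explicit identity $Y^2 = X(X-1)(X-\lambda_{n+1})$ whenever $(X,Y)=\Phi_n(x,y)$ and $y^2=x(x-1)(x-\lambda_n)$, where $\lambda_k:=\lambda(a_k,b_k)$. The $\AGM$ step immediately gives $\lambda_{n+1}=4a_nb_n/(a_n+b_n)^2$, which is the finite-field incarnation of the Landen transformation linking the real $\AGM$ to elliptic integrals; this is the arithmetic engine of the theorem. I would verify the curve identity by direct computation, writing the three factors of $X(X-1)(X-\lambda_{n+1})$ over the common denominator $x(a_n+b_n)^2$ and invoking the two elementary identities
\begin{align*}
(a_nx+b_n)^2 - x(a_n+b_n)^2 &= (x-1)(a_n^2 x - b_n^2) = a_n^2(x-1)(x-\lambda_n),\\
(a_nx+b_n)^2 - 4a_nb_n x &= (a_nx-b_n)^2.
\end{align*}
Substituting $(x-1)(x-\lambda_n) = y^2/x$ into the resulting product reproduces $Y^2$ exactly. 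This is the main obstacle, not conceptually but in bookkeeping; one must also check that the sign convention selecting $b_{n+1}=\sqrt{a_nb_n}$ is compatible with the explicit $Y$ given.

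The remaining assertions are then quick. The coordinate $X$ is a degree $2$ rational function of $x$, so $\Phi_n$ is a $2$-isogeny; since $\Phi_n$ sends $(0,0)$ (where the denominator of $X$ vanishes while its numerator $b_n^2$ does not) to the point at infinity of $E_{\lambda_{n+1}}$, the kernel must be exactly $\langle(0,0)\rangle$. Finally, the connectedness of each $\Psi_{\F_q}(J_i)$ is inherited from that of $J_i$, because every $\AGM_{\F_q}$-edge maps under $\Psi_{\F_q}$ to the $2$-isogeny edge just constructed.
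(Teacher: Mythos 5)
Your proposal is correct and follows essentially the same route as the paper: the same image characterization and $q-1$ preimage count for part (1), and the same direct verification that $\Phi_n$ lands on $E_{\lambda_{n+1}}$ for part (2) (your two factorization identities are precisely what the paper's ``brute force'' displayed computation amounts to, using $\lambda_{n+1}=4a_nb_n/(a_n+b_n)^2$). The only substantive difference is that the paper explicitly projectivizes $\Phi_n$ to confirm that $O\mapsto O$ before reading off the kernel, a step which your degree-$2$ argument for $\ker(\Phi_n)=\langle(0,0)\rangle$ implicitly assumes and which you should include.
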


\begin{proof}[Proof of Theorem~\ref{main}]
\ \ \ 
\smallskip

\noindent
(1) For an admissible pair $(a,b),$ we have $a,b\in\F_q^{\times},$ and $a\neq\pm b.$ Therefore, $b/a\in\F_q\setminus\{0,\pm1\}$ and 
$$\Psi_{\F_q}(\J_{\F_q})\subset
\{ E_{\alpha^2}/\F_q \ : \ \alpha\in \F_q\setminus \{0, \pm 1\}\}.
$$
 On the other hand, if $\alpha\in \F_q\setminus \{0, \pm 1\},$ then one can choose $(a,b)$ such that $a=\pm 1$ and $b=\pm \alpha$  with $\phi_q(ab)=1,$ giving $\lambda(a,b)=\alpha^2.$  Furthermore, each admissible pair $(a,b),$  produces the $q-1$ further admissible pairs $(ka,kb),$  all mapping to $E_{b^2/a^2}.$ Hence, each $E_{\lambda(a,b)}$ has $q-1$ preimages. 

\smallskip
\noindent
(2)  If $(x,y)\neq(0,0)\in E_{\lambda(a_n,b_n)},$ then a brute force calculation gives
\begin{displaymath}
\begin{split}
\frac{(a_n x+b_n)^2}{x(a_n+b_n)^2}
\left(\frac{(a_n x+b_n)^2}{x(a_n+b_n)^2}-1\right)
&\left(\frac{(a_n x+b_n)^2}{x(a_n+b_n)^2}-\frac{b_{n+1}^2}{a_{n+1}^2}\right)\\
&=\frac{a_n^2y^2(a_nx-b_n)^2(a_nx+b_n)^2}{x^4(a_n+b_n)^6}.
\end{split}
\end{displaymath}
This proves that 
$\Phi_n(x,y)\in E_{\lambda(a_{n+1},b_{n+1})}.$
To verify that the map preserves the identity $O=[0,1,0]$ (the point at infinity in projective space),
 we consider the projectivized form 
$\Phi_n(x,y,z):=[\varphi_1(x,y,z),\varphi_2(x,y,z),\varphi_3(x,y,z)],$
where 
\begin{displaymath}
\begin{split}
&\varphi_1:=(a_n+b_n)\\
& \ \ \ \ \ \ \times (a_n^4xy^2+(a_n^4+a_n^2b_n^2)x^3+2a_n^3b_ny^2z+(2a_n^3b_n+2a_nb_n^3)x^2z-2a_nb_n^3xz^2),\\
&\varphi_2:=2a_n^3b_n^2xyz-a_n^5y^3-(a_n^5+a_n^3b_n^2)x^2y,\\
&\varphi_3:=(a_n+b_n)^3(a_n^2y^2z+(a_n^2+b_n^2)x^2z-b_n^2xz^2).
\end{split}
\end{displaymath}
One sees that $\Phi_n(x,y,z)$ preserves the point at infinity $O.$
Furthermore, as $a_n+b_n=2a_{n+1}\neq 0$, we find by inspection that $\Phi_n$ is an isogeny with
 $\ker(\Phi_n)= \langle(0,0)\rangle.$
\end{proof}

What features are shared by the elliptic curves corresponding to the nodes of a single jellyfish? The next corollary offers the  answer.

\begin{corollary}\label{groupstructure}
For each $1\leq i\leq d(\F_q)$,  the following are true.

\smallskip
\noindent
(1) There is an abelian group $G$ such that for all 
$(a_n,b_n)\in J_i$ we have
$$
E_{\lambda(a_n,b_n)}(\F_q)\cong G.
$$
Moreover, the 2-Sylow subgroup of $G$ is  $\Z/2\Z \times \Z/2^{2+b_q(i)}\Z,$
where $b_q(i)\geq 0.$

\smallskip
\noindent
(2) There is a fixed ``trace of Frobenius'' $a_q(i)$ such that for all
$(a_n,b_n)\in J_i$ we have
$|E_{\lambda(a_n,b_n)}(\F_q)|=q+1-a_q(i).$
\end{corollary}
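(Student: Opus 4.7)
The plan is to prove both parts at once by showing that each edge of a jellyfish $J_i$ produces an isomorphism of $\F_q$-point groups between the two Legendre curves it connects. Since every tentacle of $J_i$ terminates on the unique bell head cycle, the underlying undirected graph of $J_i$ is connected, so an isomorphism along each edge propagates to produce a single abelian group $G$ with $E_{\lambda(a_n,b_n)}(\F_q)\cong G$ for every node $(a_n,b_n)\in J_i$. As isomorphic groups have the same cardinality, the trace $a_q(i)=q+1-|G|$ is then automatically constant on $J_i$, yielding part (2).

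To analyze a single edge $(a_n,b_n)\to(a_{n+1},b_{n+1})$, I would invoke Theorem~\ref{main}(2), which produces an explicit $\F_q$-rational isogeny $\Phi_n$ with $\ker(\Phi_n)=\langle(0,0)\rangle$; hence $\Phi_n$ is a $2$-isogeny. A standard consequence of Tate's theorem is that $\F_q$-isogenous elliptic curves have equal numbers of $\F_q$-points, so $|E_{\lambda(a_n,b_n)}(\F_q)|=|E_{\lambda(a_{n+1},b_{n+1})}(\F_q)|$. For any odd prime $\ell$, the kernel of $\Phi_n$ has order coprime to $\ell$, so $\Phi_n$ restricts to a Frobenius-equivariant isomorphism on $\ell$-primary torsion; passing to $\F_q$-rational points gives isomorphic $\ell$-Sylow subgroups of $E_{\lambda(a_n,b_n)}(\F_q)$ and $E_{\lambda(a_{n+1},b_{n+1})}(\F_q)$.

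To match the $2$-Sylow subgroups, I would apply Theorem~\ref{main}(1), which forces both curves to have the form $E_{\alpha^2}$. Lemma~\ref{Sylow2} then guarantees that each $2$-Sylow subgroup is isomorphic to $\Z/2\Z\times \Z/2^{2+b}\Z$ for some $b\geq 0$. Since the total group orders agree and the odd parts already agree, the two $2$-Sylow subgroups must share the same order; but two groups of the shape $\Z/2\Z\times\Z/2^{2+b}\Z$ with equal orders necessarily have the same $b$, hence are isomorphic. Combining this with the isomorphism on odd parts yields $E_{\lambda(a_n,b_n)}(\F_q)\cong E_{\lambda(a_{n+1},b_{n+1})}(\F_q)$ across each edge, as required.

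The main obstacle is the $2$-Sylow bookkeeping: a $2$-isogeny can a priori scramble the $2$-primary part of the group of rational points, so the argument relies essentially on both endpoints belonging to the image $\Psi_{\F_q}(\J_{\F_q})$ (so that $\lambda$ is a square) together with the standing assumption $q\equiv 3\pmod 4$. These two hypotheses funnel the $2$-Sylow subgroups into the rigid one-parameter family of Lemma~\ref{Sylow2}, where equality of orders collapses to isomorphism. Everything else — the reduction of (2) to (1) and the propagation along the connected graph $J_i$ — is formal.
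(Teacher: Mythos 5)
Your proposal is correct and follows essentially the same route as the paper: reduce to a single edge, invoke the explicit $2$-isogeny of Theorem~\ref{main}(2) and the fact that $\F_q$-isogenous curves have equal point counts, and then pin down the group structure, with connectivity of $J_i$ and $|G|=q+1-a_q(i)$ handling the propagation and part (2). If anything, your justification of the isomorphism across an edge is more complete than the paper's: where the paper only remarks that $\Phi_n(E_{\lambda(a_n,b_n)}(\F_q))$ has index $2$ in $E_{\lambda(a_{n+1},b_{n+1})}(\F_q)$, you split off the odd-primary part (where the degree-$2$ isogeny is a Frobenius-equivariant isomorphism) and use the rigidity $\Z/2\Z\times\Z/2^{2+b}\Z$ from Lemma~\ref{Sylow2} to force the $2$-Sylow subgroups to agree.
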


\begin{proof}[Proof of Corollary \ref{groupstructure}]
\ \ \

\smallskip
\noindent
(1) For adjacent pairs $(a_n,b_n),(a_{n+1},b_{n+1})\in J_i,$ Theorem \ref{main} (2) gives an isogeny $$\Phi_n: E_{\lambda(a_n,b_n)} \mapsto E_{\lambda(a_{n+1},b_{n+1})},$$
with $\ker(\Phi_n)=\langle(0,0)\rangle\cong \Z/2\Z.$ It is well-known 
(for example, see Exercise 5.4 on p. 153 of \cite{silverman}) that isogenous elliptic curves over finite fields have the same number of rational points. In particular, the 2-Sylow subgroups of the  groups of $\F_q$ rational points of these two curves have the same order.
Therefore, since $\Phi_n(E_{\lambda(a_n,b_n)}(\F_q))$ is an index $2$ subgroup of $E_{\lambda(a_{n+1},b_{n+1})}(\F_q),$ then as abstract groups we find that
$E_{\lambda(a_n,b_n)}(\F_q)\cong E_{\lambda(a_{n+1},b_{n+1})}(\F_q).$

\smallskip
\noindent
(2) By Theorem \ref{main} (2), we have that $\Psi_{\F_q}(J_i)$ is a connected isogeny graph. As mentioned above, isogenous curves over finite fields have the same number of rational points. Hence, there is a fixed integer $a_q(i),$ known as the trace of Frobenius,  such that
$|E_{\lambda(a_n,b_n)}(\F_q)|=q+1-a_q(i)$
 for each $(a_n,b_n)\in J_i.$ 
 \end{proof}

\begin{example} By Theorem~\ref{EZ},  $\J_{\F_{19}}$ has 144 nodes.
and it turns out that 
$$
\J_{\F_{19}} =J_1\sqcup J_2\sqcup \dots \sqcup J_6 \sqcup J_7 \sqcup J_8
$$
(i.e., $d(\F_{19})=8$),
where the jellyfish can be ordered so that $J_1, J_2,\dots, J_6$ have bell heads with cycle length 6, and $J_7$ and $J_8$
have bell heads with cycle length 18. By Theorem~\ref{main} (1), the nodes in $\J_{\F_{19}}$ map to the eight Legendre curves
with 18 preimages each.
The 6 smaller jellyfish give the isogeny graph depicted below.

\begin{center}
\includegraphics[height=70mm]{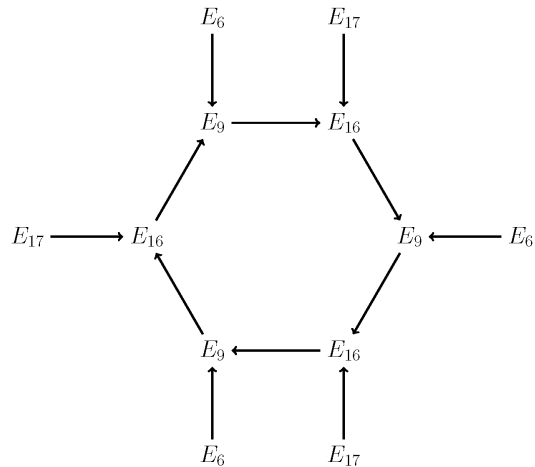}
\captionof{figure}{2D rendering of the isogeny graph of each $J\in  \{J_1,\dots,J_6$\}}
\end{center}

This defines the 3-to-1 covering
$\Psi_{\F_{19}}(J_1) = \dots =
\Psi_{\F_{19}}(J_6)=\{E_6, E_9, E_{16}, E_{17}\}. $
These Legendre curves satisfy
$$E_{6}(\F_{19})\cong E_{9}(\F_{19})\cong  E_{16}(\F_{19})\cong  E_{17}(\F_{19}) \cong \Z/ 2\Z \times \Z/ 12\Z.
$$
We have $a_{19}(1)=\dots=a_{19}(6)=19+1- |\Z/2\Z\times \Z/12\Z|=-4.$  
For $J_7$ and $J_8,$ we obtain the 9-to-1 covering
$\Psi_{\F_{19}}(J_7)= \Psi_{\F_{19}}(J_8)=\{E_4, E_{5}, E_{7}, E_{11}\},$
with 
$$     E_4(\F_{19})   \cong E_{5}(\F_{19})     \cong      E_{7}(\F_{19})  \cong E_{11}(\F_{19}) 
\cong \Z/2\Z \times \Z/8\Z.
$$
Therefore, we have $a_{19}(7)=a_{19}(8)=19+1-|\Z/2\Z\times \Z/8\Z|=4.$

This example shows that individual jellyfish generally include many non-isomorphic curves, as the $j$-invariants (see (\ref{jinvariant})) for the smaller (resp. larger) jellyfish are  $j(E_9)=j(E_{17})=5$ and $j(E_6)=j(E_{16})=15$ (resp. $j(E_7)=j(E_{11})=5$ and $j(E_{4})=j(E_{5})=15$). We shall show that the number of different $j$-invariants, like counting  types of spots on jellyfish, has taxonomic significance.

\smallskip
 \begin{center}
\includegraphics[height=82mm]{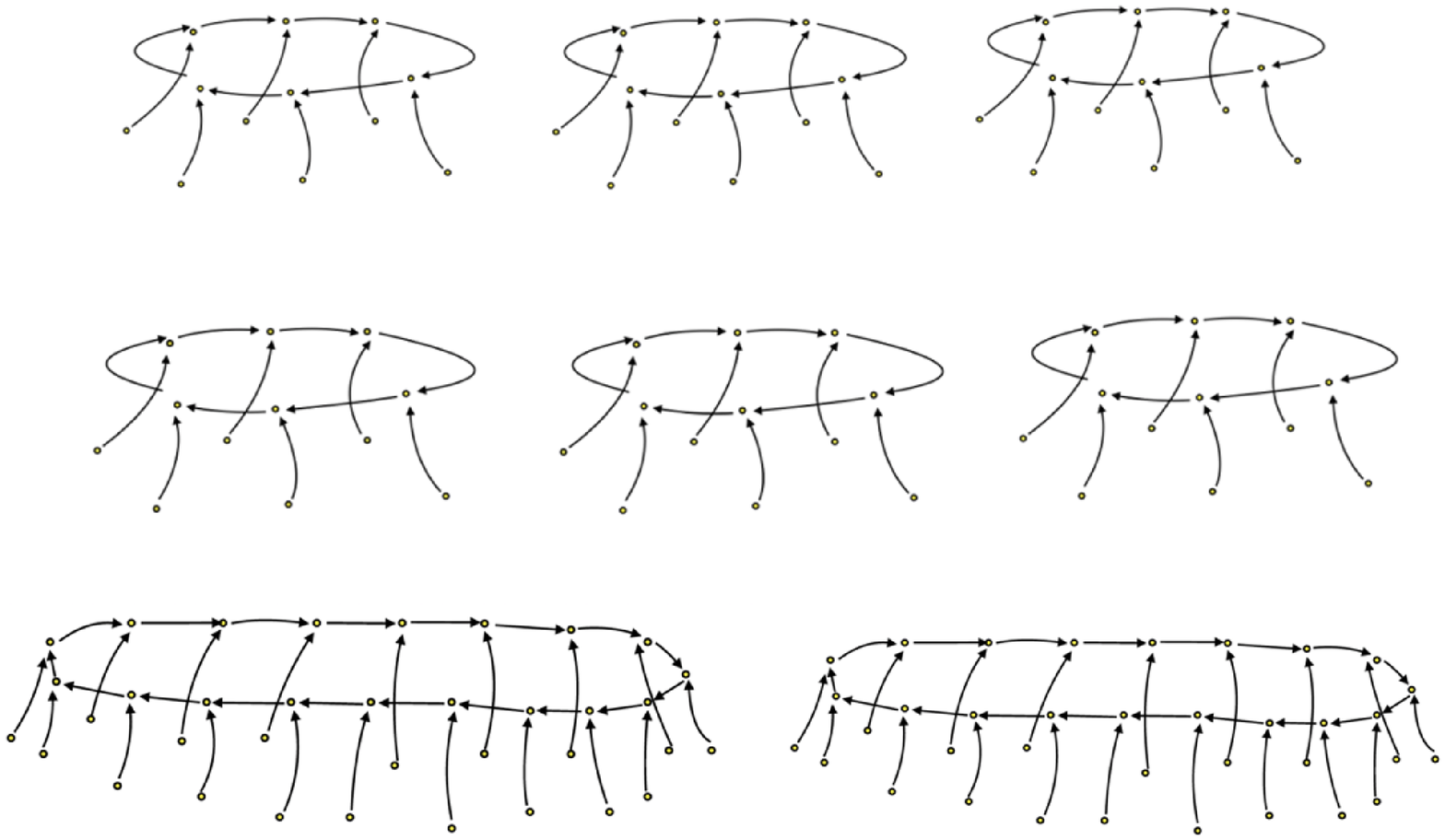} 
\captionof{figure}{$\J_{\F_{19}}$ swarm}
\end{center}
\smallskip
\end{example}

Thanks to the deeper insight offered by Corollary~\ref{groupstructure}, we are able to revisit the baffling numbers $d(\F_q),$
and offer a non-trivial lower bound. 

\begin{theorem}\label{lowerbound}
If $\varepsilon>0$, then for sufficiently large $q\equiv 3\pmod 4$ we have
$$
d(\F_q)\geq \left (\frac{1}{2}-\varepsilon\right)\cdot \sqrt{q}.
$$
\end{theorem}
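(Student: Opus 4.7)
The plan is to lower-bound $d(\F_q)$ by counting distinct Frobenius traces. By Corollary~\ref{groupstructure}(2), the trace $a_q(i)$ is constant on each jellyfish, so $d(\F_q) \geq |\mathcal{T}|$, where $\mathcal{T} := \{a_q(i) : 1 \leq i \leq d(\F_q)\}$. By Theorem~\ref{main}(1), the set $\mathcal{T}$ is precisely the set of Frobenius traces realized by the Legendre curves $E_{\alpha^2}$ for $\alpha \in \F_q \setminus \{0, \pm 1\}$. The problem therefore reduces to showing $|\mathcal{T}| \geq (1/2 - \varepsilon)\sqrt{q}$.

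A necessary congruence comes from Lemma~\ref{Sylow2}: since $8$ divides $|E_{\alpha^2}(\F_q)|$, every $t \in \mathcal{T}$ satisfies $t \equiv q + 1 \pmod{8}$. The heart of the argument is the converse statement that every integer $t \in (-2\sqrt{q}, 2\sqrt{q})$ with $t \equiv q + 1 \pmod{8}$ and $\gcd(t, p) = 1$ is actually attained. To establish this, first apply the Deuring--Waterhouse theorem to produce an ordinary elliptic curve $E/\F_q$ with $a_q(E) = t$. Next, move within the $\F_q$-isogeny class of $E$ (along its 2-isogeny volcano) to find a curve $E'$ in the same class satisfying $E'(\F_q) \supseteq \Z/2\Z \times \Z/4\Z$; the assumption $8 \mid q + 1 - t$, combined with the Waterhouse--Schoof classification of realizable group structures, guarantees that such $E'$ exists. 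Finally, write $E'$ in Legendre form, choosing the parameter so that the $4$-divisible $2$-torsion point lies at $x = \lambda$; the $2$-descent lemma, applied just as in the proof of Lemma~\ref{Sylow2}, forces $\lambda$ to be a square and thus yields $E' \cong E_{\alpha^2}$ for a suitable $\alpha$.

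For the final count, the integers $t$ in the Hasse interval $(-2\sqrt{q}, 2\sqrt{q})$ congruent to a fixed residue modulo $8$ form an arithmetic progression of common difference $8$ and length $\sqrt{q}/2 + O(1)$. At most $O(1)$ of these are supersingular (for $q$ prime, only $t = 0$; for $q = p^m$ only a bounded exceptional set). Therefore $|\mathcal{T}| \geq \sqrt{q}/2 - O(1)$, which yields $d(\F_q) \geq (1/2 - \varepsilon)\sqrt{q}$ for all sufficiently large $q$.

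The main obstacle I expect is the realization step in the second paragraph. Bare Deuring--Waterhouse only guarantees \emph{some} ordinary curve with the prescribed trace; upgrading this to produce a curve whose 2-Sylow has the precise noncyclic form $\Z/2\Z \times \Z/4\Z$ (rather than cyclic $\Z/8\Z$ or any cyclic 2-Sylow) requires genuine input from the theory of 2-isogeny volcanoes and the classification of admissible group structures for elliptic curves over $\F_q$. Making this precise for \emph{every} valid congruence class, uniformly in $q$, is the only nontrivial technical point of the argument.
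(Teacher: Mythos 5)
Your argument follows essentially the same route as the paper: both reduce to counting the traces $s\equiv q+1\pmod 8$ in the Hasse interval, each of which yields a distinct group of rational points and hence (by Corollary~\ref{groupstructure}) a distinct jellyfish, giving roughly one eighth of the $4\sqrt{q}$ integers there. The realization step you flag as the technical crux is exactly where the paper inserts a single citation: the theorem of R\"uck and Voloch \cite{ruck, voloch} directly guarantees an elliptic curve over $\F_q$ with group $\Z/2\Z\times \Z/m_q(s)\Z$, where $m_q(s)=(q+1-s)/2\equiv 0\pmod 4$, so no Deuring--Waterhouse plus volcano navigation is needed; the passage from such a curve to a Legendre curve $E_{\alpha^2}$ is then handled by the 2-descent argument in the proof of Theorem~\ref{ClassNumberRelations}, much as you describe.
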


\begin{remark}
Is this lower bound close to the truth? 
In view of examples such as 
\begin{displaymath}
\begin{split}
&d(\F_{47})=4>\sqrt{47}/2\approx 3.4278,\\
&d(\F_{383})=14>\sqrt{383}/2\approx 9.7851,\\
&d(\F_{983})=25>\sqrt{983}/2\approx 15.6764,\\
&d(\F_{1907})=38>\sqrt{1907}/2\approx 21.8346,\\
&d(\F_{7703})=87>\sqrt{7703}/2\approx 43.8833,
\end{split}
\end{displaymath}
it is tempting to speculate that this lower bound is not much smaller than an optimal bound
which perhaps might be of the form $\gg \sqrt{q}\log \log(q).$
\end{remark}

\begin{proof}[Proof of Theorem~\ref{lowerbound}]
Corollary~\ref{groupstructure} guarantees that
$d(\F_q)$ is at least as large as the number of distinct groups $G$ for which
$E_{\lambda^2}(\F_q)\cong G$
for some $\lambda\in \F_q\setminus \{0, 1\}.$ 
For a group $G$, the proof of Theorem~\ref{ClassNumberRelations} establishes the existence of such a curve provided
$\Z/ 2\Z\times \Z/4\Z\subseteq G$ and there is an $E/\F_q$ for which $E(\F_q)\cong G.$ 

We can construct many such groups.
If  $-2\sqrt{q}\leq s\leq 2\sqrt{q}$ and
$s\equiv q+1\pmod 8,$ then let $m_q(s):=(q+1-s)/2 \equiv 0\pmod 4.$
A classical theorem of R\"uck and Voloch \cite{ruck, voloch} guarantees that one can take $G:=\Z/2\Z \times \Z/ m_q(s)\Z.$  For large $q$, this represents approximately one eighth of the integers in $[-2\sqrt{q}, 2\sqrt{q}].$
Therefore, if $\varepsilon>0,$ then for sufficiently large $q$ we have
\begin{equation}\label{trivialbound}
d(\F_{q}) \geq \left(\frac{1}{2}-\varepsilon\right) \sqrt{q}.
\end{equation}
\end{proof}

\section{Jellyfish swarms and Gauss' Class numbers.} 

The  swarms $\J_{\F_q}$ offer new descriptions
of the {\it class numbers} studied by Gauss, Hurwitz and Kronecker (see \cite{Cox} for more on class numbers).
To make this precise, recall that an \textit{integral binary quadratic form} is a homogeneous degree 2 polynomial 
$$f(x,y):=ax^2+bxy+cy^2\in\Z[x,y].
$$
The \textit{discriminant}\footnote{Discriminants always satisfy $D\equiv 0, 1\pmod 4.$} of $f$ is $D:=b^2-4ac.$ If $a>0$ and $D<0,$ then $f(x,y)$ is called 
{\it positive definite}. Furthemore, $f$ is {\it primitive} if $\gcd(a,b,c)=1.$
For negative discriminants $D$,
the group $\SL_2(\Z)$  acts on
$\mathcal{Q}_D,$ 
 the set of positive definite binary quadratic forms of discriminant $D.$
More precisely, for any $\gamma=\left(\begin{smallmatrix}
u & v\\
r & s
\end{smallmatrix}\right),$ we have
$$
\left(f\circ \gamma\right)(x,y):=f(ux+vy,rx+sy).
$$
Although there are infinitely many primitive binary quadratic forms with discriminant $D,$
Gauss proved that their number of $\SL_2(\Z)$-orbits is finite, and this number is known as \textit{Gauss' class number} $h(D).$ 

Gauss' class numbers lead to the more general  {\it Hurwitz-Kronecker class numbers.}
If $N\equiv 0,3\pmod 4$, then
the Hurwitz-Kronecker class number $H(N)$ is the
class number of positive definite integral binary quadratic forms of discriminant $-N,$
where each class $C$ is counted with multiplicity 
$1/{\text {\rm Aut}}(C).$
If $-N=Df^2,$ where $D$ is a negative fundamental discriminant (i.e., the discriminant of the ring of integers of an imaginary quadratic field),
then $H(N)$ is related to $h(D)$ by  (for example, see p. 273 of \cite{Cox})
$$
  H(N)=\frac{h(D)}{w(D)}\sum_{d|f}\mu(d)
\leg{D}{d}\sigma_1(f/d). 
$$
Here $w(D)$ is half the number of integral units in
$\Q(\sqrt{D})$, and $\sigma_s(n)$ denotes the
sum of the $s^{th}$ powers of the positive divisors of $n$, and $\leg{D}{\cdot}$ is the quadratic Dirichlet character with conductor $D.$

Class numbers have a long and rich history. For example, class numbers play a central role in the study of quadratic forms.
Indeed, if $r_3(n)$ denotes the number of representations of an integer $n$ as a sum of three squares, then Gauss proved that
$$
r_3(n)=\begin{cases} 12H(4n) \ \ \ \ &{\text {\rm if}}\ n\equiv 1,2\pmod 4,\\
           24H(n) \ \ \ \  &{\text {\rm if}}\ n\equiv 3\pmod 8,\\
            r_3(n/4) \ \ \ \ &{\text {\rm if}}\ n\equiv 0\pmod 4,\\
             0 \ \ \ \ &{\text {\rm if}} \ n\equiv 7\pmod 8.
\end{cases} 
$$
Class numbers play even deeper roles in algebraic and analytic number theory, as they are the orders of ideal class groups of rings of integers and orders of imaginary quadratic fields. These groups themselves are the Galois groups of Hilbert class fields. For brevity, we simply say that the study of class numbers continues to drive cutting edge research today.

The jellyfish swarms $\J_{\F_q}$ offer a new interpretation of these class numbers. As the nodes are jellyfish spots, it is quite gratifying to discover that class numbers represent the number of types of spots that appear in a family of jellyfish.
In this analogy, the $j$-invariants distinguish these types of spots.
Namely, for integers $s,$ let
$M_{\F_q}(s)$ be the number of distinct $j$-invariants of curves in the union of
jellyfish with $a_q(i)=s,$ the ``Frobenius trace $s$ family.'' We have the following attractive description which follows from a well-known theorem of Schoof.

\begin{theorem}\label{ClassNumberRelations} Suppose that $\F_q$ is a finite field with $q\equiv 3\pmod 4$ and $p\geq 7.$
If $-2\sqrt{q}\leq s\leq 2\sqrt{q}$ is a non-zero integer with $s\equiv q+1\pmod8,$ then we have
\[
H\left(\frac{4q-s^2}{4}\right)= M_{\F_q}(s).
\]
\end{theorem}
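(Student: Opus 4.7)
The strategy is to reduce the statement to a count of $\F_q$-isomorphism classes of elliptic curves with prescribed Frobenius trace and then invoke Schoof's class-number formula. The plan has three stages.

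\emph{Stage 1 (Image characterization).} First I would show that
\[ \{ j(E) : E \in \Psi_{\F_q}(\J_{\F_q}) \} = \{ j(E) : E/\F_q \text{ with } \Z/2\Z \times \Z/4\Z \subseteq E(\F_q)\}. \]
The forward inclusion is Lemma~\ref{Sylow2}. For the converse, such an $E$ has full rational $2$-torsion and hence a Legendre model $E_\mu$, where $\mu$ is determined only up to the $S_3$-orbit $\{\mu, 1/\mu, 1-\mu, 1/(1-\mu), \mu/(\mu-1), (\mu-1)/\mu\}$. Applying the $2$-descent criterion from the proof of Lemma~\ref{Sylow2} to each of these six parameters, the existence of a point of order $4$ in $E(\F_q)$ forces at least one of them to be a square $\lambda^2$, and then $j(E) = j(E_{\lambda^2})$.

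\emph{Stage 2 (Reduction to iso-class counts).} Since $s \neq 0$, each $j$-invariant different from $0$ and $1728$ corresponds to at most one $\F_q$-isomorphism class with Frobenius trace $s$, its quadratic twist having trace $-s$. The congruence $s \equiv q+1 \pmod 8$ forces $|E(\F_q)| = q+1-s$ to be divisible by $8$, so as soon as $E(\F_q)$ contains the full $2$-torsion, its $2$-Sylow subgroup (of rank at most $2$) automatically contains $\Z/2\Z \times \Z/4\Z$. Thus, up to a negligible contribution at $j \in \{0, 1728\}$, $M_{\F_q}(s)$ equals the number of $\F_q$-isomorphism classes of $E/\F_q$ with trace $s$ and $E[2] \subseteq E(\F_q)$.

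\emph{Stage 3 (Schoof).} Full rational $2$-torsion is equivalent to $(\pi-1)/2 \in \mathrm{End}(E)$, i.e.\ to $\mathrm{End}(E) \supseteq \Z[(\pi-1)/2]$, and a direct calculation gives $\mathrm{disc}(\Z[(\pi-1)/2]) = (s^2-4q)/4$ in $K = \Q(\sqrt{s^2-4q})$. Schoof's extension of Deuring's correspondence then identifies the count from Stage 2 with
\[ \sum_{\Z[(\pi-1)/2] \subseteq \mathcal{O} \subseteq \mathcal{O}_K} h(\mathcal{O}) = H\!\left(\frac{4q - s^2}{4}\right), \]
the right-hand equality being the standard unpacking of the Hurwitz-Kronecker class number. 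The main obstacle is Stage 1: tracking, within the $S_3$-orbit of the Legendre parameter of $E$, which members are squares and tying this to the precise $4$-torsion structure via $2$-descent. Stages 2 and 3 are essentially bookkeeping, though minor care is needed near $j \in \{0, 1728\}$ to reconcile the unweighted isomorphism-class count with the weighting built into $H$.
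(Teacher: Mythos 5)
Your overall strategy coincides with the paper's: reduce $M_{\F_q}(s)$ to the number of $\F_q$-isomorphism classes of curves with full rational $2$-torsion and trace $s$, then invoke Schoof's theorem (the paper cites Section 4 of \cite{schoof} directly, whereas your Stage 3 unpacks it via $\Z[(\pi-1)/2]$ and the Deuring correspondence; both are fine). However, there is a genuine gap in how your Stage 1 feeds into Stage 2. You prove Stage 1 only at the level of $j$-invariants, and that is too weak for the surjectivity half of the Stage 2 identification. Concretely: given an isomorphism class $E$ with trace $s$ and $\Z/2\Z\times\Z/4\Z\subseteq E(\F_q)$, your Stage 1 produces some $\lambda$ with $j(E_{\lambda^2})=j(E)$, but $E_{\lambda^2}$ could a priori be the nontrivial quadratic twist of $E$, with trace $-s$. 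In that case $j(E)$ would be counted in $M_{\F_q}(-s)$ but not necessarily in $M_{\F_q}(s)$, and your claimed equality of $M_{\F_q}(s)$ with the isomorphism-class count would fail in the direction $\geq$. The distinction between ``same $j$-invariant'' and ``$\F_q$-isomorphic'' is exactly the distinction the theorem is sensitive to, since the whole point is to separate the trace-$s$ family from the trace-$(-s)$ family.

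The repair is to upgrade Stage 1 to the statement that every $E/\F_q$ with $\Z/2\Z\times\Z/4\Z\subseteq E(\F_q)$ is $\F_q$-isomorphic (not merely geometrically isomorphic) to some $E_{\lambda^2}$; this is what the paper imports as Proposition 3.3 of \cite{ahlgren-ono}. Your proposed method does essentially prove this if carried out carefully: since $q\equiv 3\pmod 4$, one checks that exactly four of the six parameters in the $S_3$-orbit are squares (or none are), and that the square parameters realize both members of the quadratic twist pair; alternatively, note that the congruence $s\equiv q+1\pmod 8$ together with $q\equiv 3\pmod 4$ forces $8\mid q+1+s$ as well, so the quadratic twist of $E$ also satisfies all the hypotheses, and the strong form of Stage 1 applied to each twist separately gives what you need. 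One further small caveat, which the paper shares: the claim that each $j$-invariant meets at most one class per trace needs a word for $j\in\{0,1728\}$, where non-quadratic twists exist; for $q\equiv 3\pmod 4$ and $s\neq 0$ the case $j=1728$ is supersingular and drops out, but $j=0$ with $p\equiv 1\pmod 3$ deserves the ``minor care'' you flagged.
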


\begin{example}
We revisit the example of $\J_{\F_{19}}$, where
 $J_1,\dots, J_6$ (resp. $J_7$ and $J_8$) are the smaller (resp. larger) jellyfish. We found earlier that the Frobenius trace -4 family is
$$\Psi_{\F_{19}}(J_1) \cup \dots \cup
\Psi_{\F_{19}}(J_6)=\{E_6, E_9, E_{16}, E_{17}\}.
$$
One checks (using (\ref{jinvariant}) that $j(E_9)=j( E_{17})=5$ and $j(E_6)=j(E_{16})=15,$ giving
$M_{\F_{19}}(-4)=2.$ 
We also found that the Frobenius trace $4$ family is
$$ \Psi_{\F_{19}}(J_7)\cup \Psi_{\F_{19}}(J_8)=\{E_4, E_{5}, E_{7}, E_{11}\}.$$
As $j(E_7)=j( E_{11})=5,$ and $j(E_{4})=j(E_{5})=15,$
we also have $M_{\F_{19}}(4)=2.$
Therefore, since $M_{\F_{19}}(\pm 4)=2$,
Theorem~\ref{ClassNumberRelations} gives  $$H\left(\frac{76-(\pm 4)^2}{4}\right)=H(15)=2.$$
\end{example}

\begin{proof}[Proof of Theorem~\ref{ClassNumberRelations}]
Let $E/\F_q$ be an elliptic curve for which $|E(\F_q)|\equiv 0\pmod 8$ and
  $\Z/2\Z\times\Z/2\Z\subseteq E(\F_q).$
We automatically have $\Z/2\Z\times\Z/4\Z\subseteq E(\F_q),$
because these groups can always be described as a direct product of at most 2 cyclic groups.
 Moreover, an application of the 2-descent lemma (for example, see
Proposition 3.3 of \cite{ahlgren-ono}) implies that there is an $\alpha\in\F_q\setminus\left\{0, \pm1\right\}$ for which $E_{\alpha^2}\cong_{\F_q} E.$  Conversely, every $E_{\lambda(a,b)}$ is such an $E$ thanks to Corollary~\ref{groupstructure}.
Therefore, $\J_{\F_q}$ encodes the isomorphism classes of elliptic curves $E/\F_q$ with $\Z/2\Z\times \Z/2\Z\subseteq E(\F_q),$ with the additional property that $q\equiv s+1\pmod 8,$ where
$s:=q+1-\# E(\F_q).$

We consider the $\F_q$ isomorphism classes of such curves with fixed non-zero trace of Frobenius $s$.
If $(a,b), (a',b')\in \J_{\F_q}$ satisfies $j(E_{\lambda(a,b)})=j(E_{\lambda(a',b')}),$ then either $E_{\lambda(a,b)}\cong_{\F_q} E_{\lambda(a',b')},$ or they are non-trivial twists of each other (see Chapter X of \cite{silverman}).
In the latter case, the traces of Frobenius  differ in sign.

Combining these facts, we have that $M_{\F_q}(s)$ is the number of isomorphism classes of elliptic curves
$E/\F_q$ with $\Z/2\Z\times\Z/2\Z\subseteq E(\F_q)$ and trace of Frobenius $s$. 
The rich theory of complex multiplication for elliptic curves is the bridge which connects the counts of such classes with equivalence classes of binary quadratic forms.
Indeed, a well-known deep theorem of Schoof (see Section 4 of \cite{schoof}) asserts that
$H\left(\frac{4q-s^2}{4}\right)$ equals
 the number of such isomorphism classes of elliptic curves. Invoking this theorem completes the proof.

\end{proof}

\section{Analogies between Hypergeometric functions.}\label{Discussion}

Does the classical $\AGM_{\R}$ have more in common with its finite field analogues than
the inductive rules
$$
a_n:=\frac{a_{n-1}+b_{n-1}}{2} \ \ \ \ \ {\text {\rm and}}\ \ \ \ \ b_n:=\sqrt{a_{n-1}b_{n-1}}\ ?
$$
This is indeed the case. It turns out that the results offered above are a byproduct
of remarkable analogies between complex hypergeometric functions and
their finite field analogues. Let us explain.

\subsection{Hypergeometric functions and $\AGM_{\R}.$}

The theory underlying $\AGM_{\R}$ (see Chapter 1 of \cite{Borwein}) is a story involving special integrals and their relationship with Gauss' hypergeometric functions.  To make this precise, for $a>b>0,$ we let
\begin{equation}\label{IR}
I_{\R}(a,b):=\frac{1}{2a}\int_1^{\infty}\frac{dx}
{\sqrt{x(x-1)(x-(1-b^2/a^2))}},
\end{equation}
where the polynomial in the square-root in the denominator of the integrand is tantalizingly\footnote{These models are actually $-1$ quadratic twists of each other.} close to the cubic in the Legendre curve
$$
E_{\lambda(a,b)}: \ \ y^2=x(x-1)(x-b^2/a^2).
$$
It is straightforward to check that
$I_{\R}(a,b)=I_{\R}\left(\frac{a+b}{2}, \sqrt{ab}\right),$ which in turn implies that
$\AGM_{\R}(a,b)=\{(a_1, b_1), (a_2, b_2),\dots\}$ satisfies
\begin{equation}\label{seq-1}
I_{\R}(a_1,b_1)=I_{\R}(a_2,b_2)=\cdots=I_{\R}(a_n,b_n)=\cdots.
\end{equation}

Gauss discovered a beautiful formula for $I_{\R}(a,b)$ in terms of hypergeometric functions.
For $\alpha_1,\alpha_2, \dots \alpha_n$ and $\beta_1, \dots, \beta_{n-1}\in \C,$ these functions are defined by
\begin{equation}\label{classF}
_nF^{\class}_{n-1}\left(\begin{matrix}
\alpha_1, &\alpha_2, &\dots, & \alpha_n\\
~ & \beta_1, &\dots, &\beta_{n-1}
\end{matrix}\mid t\right):=\sum_{k=0}^{\infty}\frac{(\alpha_1)_k(\alpha_2)_k\cdots
(\alpha_n)_k}{(\beta_1)_k\cdots (\beta_{n-1})_k}\frac{t^k}{k!},
\end{equation}
where $(x)_k$ is the Pochhammer symbol defined by
\[{\displaystyle (x)_{k}={\begin{cases}1&\ \ \ \mathrm{if}\ \ \ k=0\\x(x+1)\cdots (x+k-1)&\ \ \ \mathrm{if}\ \ \ k>0.\end{cases}}}
\]
Gauss' theory of elliptic integrals \cite[p. 182]{Husemoller} gives
\begin{equation}\label{relation}
I_{\R}(a,b)=\frac{\pi}{2a}\cdot {_2F^{\class}_1}\left(\begin{matrix}
\frac{1}{2},&\frac{1}{2}\\
~& 1
\end{matrix}\mid 1-\frac{b^2}{a^2}\right),
\end{equation}
which, by letting $a\mapsto (a+b)/2$ and $b\mapsto \sqrt{ab},$ also gives
\begin{equation}\label{relation2}
I_{\R}\left(\frac{a+b}{2},\sqrt{ab}\right)=\frac{\pi}{a+b}\cdot {_2F^{\class}_1}\left(\begin{matrix}
\frac{1}{2},&\frac{1}{2}\\
~& 1
\end{matrix}\mid \frac{(a-b)^2}{(a+b)^2}\right).
\end{equation}
Equating these expressions, we find that $\AGM_{\R}$ leads to the identity 
$$
{_2F^{\class}_1}\left(\begin{matrix}
\frac{1}{2},&\frac{1}{2}\\
~& 1
\end{matrix}\mid 1-\frac{b^2}{a^2}\right)
=\frac{2a}{a+b}\cdot{_2F^{\class}_1}\left(\begin{matrix}
\frac{1}{2},&\frac{1}{2}\\
~& 1
\end{matrix}\mid \frac{(a-b)^2}{(a+b)^2}\right),
$$
which relates $1-b^2/a^2$ with  $\lambda(a+b,a-b)=(a-b)^2/(a+b)^2.$
This identity is a special case (i.e., $\alpha=\beta=1/2$ and $t=(a-b)/(a+b)$) of the  far more general quadratic transformation formula (see \cite[(3.1.11)]{special-function-book}) 
\begin{eqnarray}\label{quadraticR}
{_2F^{\class}_1}\left(\begin{matrix}
\alpha, & \beta\\
~& 2\alpha
\end{matrix}\mid \frac{4t}{(1+t)^2}\right)&\\
=(1+t)^{2\beta}\cdot {_2F^{\class}_1}&\left(\begin{matrix}
\beta+\frac{1}{2}-\alpha,& \beta\\
~& \alpha+\frac{1}{2}
\end{matrix}\mid t^2\right).\notag
\end{eqnarray}

\subsection{Hypergeometric functions and $\AGM_{\F_q}.$}

In view of the previous discussion, we seek a finite field analog of 
$$
I_{\R}(a,b):=\frac{1}{2a}\int_1^{\infty}\frac{dx}
{\sqrt{x(x-1)(x-(1-b^2/a^2))}}.
$$
To this end, one can replace the integral over $\R$ by a sum over $\F_q,$ and
replace the square-root with the quadratic character $\phi_q(\cdot),$
and we can naively declare the finite field analogue to be the sum
\begin{equation}\label{IFq}
I_{\F_q}(a,b):=\sum_{x\in\F_q}\phi_q(x)\phi_q(x-1)\phi_q(x-(1-b^2/a^2)).
\end{equation}
We hope that such sums are values of hypergeometric-type functions.

In his important 1984 Ph.D thesis \cite{GreenePhD}, Greene defined the {\it finite field hypergeometric functions} that do the trick (see \cite{ono, ono-book} for applications). For multiplicative characters\footnote{For multiplicative characters $\chi$, we adopt the convention that $\chi(0):=0.$} $A_1,A_2,\ldots,A_n$ and $B_1,B_2,\ldots,B_{n-1}$ of $\F_q^{\times},$  
he defined
\begin{displaymath}
\begin{split}
{_{n}F_{n-1}}&\left(\begin{array}{cccc}
A_1, & A_2,& \ldots, & A_n\\
~& B_1, &\ldots, &B_{n-1}
\end{array}\mid t\right)_{\F_q}:=\\
&\ \ \ \ \ \ \ \ \ \ \ \ \ \ \ \ \ \ \ \ \ \ \ \ \ \frac{q}{q-1}\sum_{\chi}{A_1\chi\choose\chi}{A_2\chi\choose B_1\chi}\cdots{A_n\chi\choose B_{n-1}\chi}\chi(t),
\end{split}
\end{displaymath}
where the sum is over the multiplicative characters of $\F_q^{\times},$ and ${A\choose B}$ is the normalized Jacobi sum
$$
{A\choose B}:=\frac{B(-1)}{q}J(A,\overline{B}):=\frac{B(-1)}{q}\sum\limits_{t\in\F_q}A(t)\overline{B}(1-t).
$$
This definition was meant to resemble (\ref{classF}), and is based on analogies between Gauss sums and the complex $\Gamma$-function, which interpolates factorials, and the classical Gauss sum expression for Jacobi sums (when $\chi \psi$ is nontrivial)
$$
J(\chi,\Psi)=\frac{G(\chi)G(\psi)}{G(\chi \psi)},
$$
which in turn emulates binomial coefficients.

These functions take an attractive form when $n=2$ (see p. 82 of \cite{Greene}).
If $A, B$ and $C$ are characters of $\F_q$ and $t\in\F_q^\times,$ then
$$
_2F_1\left( \begin{matrix} A, & B\\ \ & C \end{matrix}\ \mid \
t\right)_{\F_q}=\frac{BC(-1)}{q}\cdot \sum_{x\in \F_q} B(x)\cdot \overline{B}C(1-x)\cdot \overline{A}(1-xt).
$$
In particular, if $A=B=\phi_q(\cdot)$ and $\varepsilon_q(\cdot)$ is trivial, then a change of variables gives
\begin{equation}\label{FFq}
 _2F_1(\lambda)_{\F_q}:=\  _2F_1\left ( \begin{matrix}\phi_q, & \phi_q\\ \ & \varepsilon_q\end{matrix}\ | \ \lambda\right)_{\F_q}=-\frac{\phi_q(-1)}{q}\cdot \sum_{x\in \F_q}
\phi_q(x(x-1)(x-\lambda)).
\end{equation}
In analogy with Gauss' integral formulas, which give {\it periods} of elliptic curves,
Greene's functions compute traces of Frobenius over $\F_q.$ Indeed,
if $\ch(\F_q)>3$ and $\lambda\in \F_q\setminus\{0,1\},$ then (\ref{FFq}) gives
\begin{equation}\label{trace2F1}
|E_{\lambda}(\F_q)|=q+1+q\phi_q(-1)\cdot \, _2F_1(\lambda)_{\F_q}.
\end{equation}

In terms of the desired analogy, 
if $q\equiv 3\pmod 4,$  then (\ref{IFq}) and (\ref{FFq}) gives the counterpart of (\ref{relation})
$$
 I_{\F_q}(a,b)= q\cdot \,_2F_1(1-b^2/a^2)_{\F_q}.
$$
To complete the analogy, we require a quadratic transformation law which plays the role
of (\ref{quadraticR}).
We conclude by stating this recent theorem of
Evans and Greene (see Th. 2 of \cite{EvansGreene}), which precisely offers the desired analogous transformation.

\begin{theorem}\label{2F1Transformation}{\text {\rm [Th. 2 of \cite{EvansGreene}]}}
Suppose that $A,$ $A^2\overline{B},$ and $\phi_q A \overline{B}$ are all nontrivial characters of $\F_q^\times.$ If $t\in \F_q\setminus \{-1\},$ then
\begin{displaymath}
\begin{split}
& _2F_1\left( \begin{matrix} A, & B\\ \ & A^2\end{matrix} \ \mid \ \frac{4t}{(1+t)^2}\right)_{\F_q}\\
&
\ \ =\frac{\overline{A}(4) \phi_qB(-1) G(A^2 \overline{B})\cdot G(\phi_q \overline{A}B)}
{G(\phi_q) G(A)}\cdot B^2(1+t) \, _2F_1\left( \begin{matrix}\phi_q \overline{A}B, & B\\
\ & \phi_q A\end{matrix}\  \mid \ t^2\right)_{\F_q}.
\end{split}
\end{displaymath}
\end{theorem}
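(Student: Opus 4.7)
The plan is to mirror the classical proof of (\ref{quadraticR}), which combines Euler's integral representation of $_2F_1$ with a Möbius substitution sending $4t/(1+t)^2$ to $t^2$. In the finite field setting, Greene's character sum replaces the integral, Gauss sums play the role of the Gamma function, and a Hasse-Davenport product formula supplies the ``quadratic duplication'' that lets one track characters cleanly through the substitution.

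First, I would expand both sides using Greene's representation
$$_2F_1(A,B;C\mid z)_{\F_q} = \frac{BC(-1)}{q} \sum_{x\in \F_q} B(x)\,\overline{B}C(1-x)\,\overline{A}(1-zx),$$
and apply the substitution $x = (1+t)^2 u/(1+tu)^2$ to the left-hand side. Two algebraic identities are the engine of the argument: first, $1 - \frac{4t}{(1+t)^2}\cdot\frac{(1+t)^2 u}{(1+tu)^2} = \frac{(1-tu)^2}{(1+tu)^2}$, which makes the argument of $\overline{A}$ a perfect square so that the character splits across linear factors; and second, $1-x = (1-u)(1-t^2u)/(1+tu)^2$, which produces the $(1-u)$ and $(1-t^2 u)$ factors that one sees in the right-hand side sum. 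The substitution also visibly factors $B^2(1+t)$ out of $B(x)$.

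After the substitution, the characters landing on $(1-u)$ and $(1-t^2 u)$ come out as $A^2\overline{B}$, whereas the right-hand side requires $\phi_q A \overline{B}$. Bridging this character gap is the heart of the proof, and is what forces Gauss sums to appear. One approach is to use the orthogonality-based identity $\chi(y) = G(\chi)^{-1}\sum_\psi G(\chi\psi)\overline{\psi}(y)$ (or an equivalent direct Jacobi sum manipulation) to trade the ``missing'' factor of $A\phi_q$ for a character convolution, which after collapse produces the factors $G(A^2\overline{B})$ and $G(\phi_q\overline{A}B)$ via evaluations of the form $J(\chi_1,\chi_2) = G(\chi_1)G(\chi_2)/G(\chi_1\chi_2)$. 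A Hasse-Davenport product formula of the shape $G(A)G(\phi_q A) = \overline{A}(4)\, G(A^2) G(\phi_q)$ finally accounts for the $\overline{A}(4)$ prefactor and the denominator $G(\phi_q)G(A)$.

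The main obstacle is the bookkeeping. Each substitution and each character rearrangement introduces signs $\chi(-1)$ and normalization factors, and these must assemble into exactly the stated $\phi_q B(-1)$; similarly the Möbius substitution must be handled carefully around its pole $u=-1/t$ and the point at infinity (using the convention $\chi(0)=0$). The three nontriviality hypotheses on $A$, $A^2\overline{B}$, and $\phi_q A\overline{B}$ are precisely the conditions needed for $G(A)$, $G(A^2\overline{B})$, and $G(\phi_q A\overline{B})$ to be nonvanishing and for the relevant Jacobi-to-Gauss conversions to apply, so they should emerge naturally from the proof rather than being imposed at the outset.
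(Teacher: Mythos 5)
A preliminary point: the paper does not prove this statement at all --- it is quoted, with attribution, as Theorem 2 of Evans and Greene \cite{EvansGreene} --- so there is no in-paper argument to compare yours against, and your proposal has to stand on its own. Your outline does assemble the right ingredients: Greene's character-sum representation of $_2F_1$, a quadratic rational substitution, Jacobi-to-Gauss conversions, and the Hasse--Davenport product formula $G(A)G(\phi_q A)=\overline{A}(4)\,G(A^2)G(\phi_q)$. Your two algebraic identities are also correct, as is the extraction of $B^2(1+t)$.

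The genuine gap is at the very first step. The substitution $x=(1+t)^2u/(1+tu)^2$ is a degree-two rational map in $u$: solving for $u$ gives $xt^2u^2+\bigl(2xt-(1+t)^2\bigr)u+x=0$, whose number of solutions is $1+\phi_q\bigl((1+t)^2-4xt\bigr)$ for generic $x$. Over $\R$ this causes no trouble because the change of variables carries a Jacobian; over $\F_q$, summing over $u$ does \emph{not} reindex the sum over $x$ --- it counts each $x$ with multiplicity $0$, $1$, or $2$. The correct finite-field procedure is to insert the preimage-counting weight $1+\phi_q(\cdot)$, which splits the sum into a main piece and a $\phi_q$-twisted piece; it is exactly this mechanism, not a later orthogonality trick, that injects the quadratic character $\phi_q$ and (after Jacobi-sum evaluations) the Gauss-sum prefactors. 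Your sketch instead treats the substitution as bijective and then hopes to manufacture the missing factor of $\phi_q A$ afterwards by a character-convolution identity that is only gestured at (``one approach is to \dots which after collapse produces \dots''); by your own description this is the heart of the proof, and it is not carried out. In addition, the residual characters your identities place on $1+tu$ and $1-tu$ (namely $A^2\overline{B}^2(1+tu)$ and $\overline{A}^2(1-tu)$ after splitting the square) are never disposed of, and the claim that the nontriviality hypotheses ``emerge naturally'' is not substantiated. As it stands the proposal is a plausible roadmap rather than a proof.
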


\section{Epilogue.}

We hope that the reader agrees that the story presented here is a beautiful amalgamation of  facts about elliptic curves over
$\C$ and over finite fields. It is quite marvelous to find that the hypergeometric functions
of Gauss (in the case of $\R$) and of Greene (in the case of $\F_q$) underlie different features in the theory of  elliptic curves that are captured by sequences of arithmetic and geometric means. We hope that this story encourages readers to learn more about the theory of Gauss' class numbers, elliptic curves, and hypergeometry. We highly recommend D. Cox's  book ``Primes of the form $x^2+ny^2$''  \cite{Cox} and ``Pi and the AGM'' \cite{Borwein} by P. Borwein and J. Borwein.

We aim to entice readers with the following tantalizing problems.

\begin{questions} \ \ \

\noindent
(1) What can one prove about the sizes of the jellyfish in $\J_{\F_q}$?
This question is intimately connected to the unproven Cohen-Lenstra heuristics on the expected behavior
of class groups of imaginary quadratic orders.
\smallskip

\noindent
(2) Determine an ``optimal'' function $D(q)$ for which 
$$d(\F_q)\geq D(q).$$
In particular, how close to optimal is the lower bound in Theorem~\ref{trivialbound}?
Is the correct lower bound more like $\gg \sqrt{q}\log \log(q)$?
\smallskip

\noindent
(3) It would be very interesting to define variants of $\AGM$ in situations where choices of square-root are not well-defined, such as
the complex field $\C$ and the finite fields $\F_q$ with $q\not \equiv 3\pmod 4.$

\end{questions}

To conclude, we must confess that the $\AGM_{\F_q}$ jellyfish are merely alluring examples of creatures that inhabit the magnificent kingdom formed out of elliptic curves over finite fields. The beautiful $\AGM_{\F_q}$ sequences innocently offer glimpses of the fascinating theory of isogenies for elliptic curves over finite fields, which form networks, dubbed {\it isogeny volcanoes}. The jellyfish are examples that arise from ``2-volcanoes of height 1.'' Isogeny volcanoes
play important roles in computational number theory  and cryptography. They are often employed as a means of accelerating number theoretic algorithms. They have even been used to quickly compute values of Euler's partition function
\cite{PofN}.
 This important theory has its origins in David Kohel's 1996 PhD  thesis \cite{Kohel}. We invite interested readers to read the delightful expository article \cite{sutherland} by Sutherland.

\begin{acknowledgment}{Acknowledgment.}
The authors thank  the referees, Jennifer Balakrishnan, Hasan Saad, and Drew Sutherland for comments and suggestions that improved this article.
The second
  author thanks  the Thomas Jefferson Fund and the NSF
(DMS-2002265 and DMS-2055118) for their generous support, as well as  the Kavli Institute grant NSF PHY-1748958.
The third author is grateful for the support of a Fulbright Nehru Postdoctoral Fellowship.
\end{acknowledgment}

\begin{biog}

\item[Michael J. Griffin] received the Ph.D. degree in Mathematics from Emory University in 2015. He is an Assistant Professor of Mathematics at Brigham University in Provo, Utah, USA. His research interests  are in number theory. \begin{affil}
Department of Mathematics, 275 TMCB,  Brigham Young University, Provo, UT 84602\\
mjgriffin@math.byu.edu
\end{affil}

\item[Ken Ono] received the Ph.D. degree in Mathematics from UCLA in 1993. He is the Thomas Jefferson Professor of Mathematics at the University of Virginia, Charlottesville, Virginia, USA. His research interests are in number theory.
\begin{affil}
Department of Mathematics, University of Virginia, Charlottesville, VA 22904\\
ken.ono691@virginia.edu
\end{affil}

\item[Neelam Saikia] received the Ph.D. degree in Mathematics from the Indian Institute of Technology in Delhi, India in 2016.  She is a Nehru-Fulbright Postdoctoral Fellow at the University of Virginia, Charlottesville, Virginia, USA. Her research interests are in number theory.
\begin{affil}
Department of Mathematics, University of Virginia, Charlottesville, VA 22904\\
nlmsaikia1@gmail.com
\end{affil}

\item[Wei-Lun Tsai] received the Ph.D. degree in Mathematics from the Texas A\&M University in 2020. He is a Research Postdoctoral Fellow at the University of Virginia, Charlottesville, Virginia, USA. His research interests are in number theory.
\begin{affil}
Department of Mathematics, University of Virginia, Charlottesville, VA 22904\\
wt8zj@virginia.edu
\end{affil}

\end{biog}
\vfill\eject

\end{document}